\theoremstyle{plain}
\newtheorem{thm}{Theorem}
\newtheorem{lem}[thm]{Lemma}
\newtheorem*{KKT}{Kierstead and Kostochka}
\newtheorem*{TheoremM}{Theorem M}
\newtheorem*{CorN}{Corollary N}
\newtheorem*{CorO}{Corollary O}
\theoremstyle{definition}
\newtheorem{defn}{Definition}
\theoremstyle{remark}
\title{Coloring $\Delta$-Critical Graphs With Small High Vertex Cliques}
\author{Landon Rabern\\
\small \texttt{landon.rabern@gmail.com}}
\begin{document}
\maketitle
\begin{abstract}
We prove that $K_{\chi(G)}$ is the only critical graph $G$ with $\chi(G) \geq \Delta(G) \geq 6$ and $\omega(\mathcal{H}(G)) \leq \left \lfloor \frac{\Delta(G)}{2} \right \rfloor - 2$.  Here $\mathcal{H}(G)$ is the subgraph of $G$ induced on the vertices of degree at least $\chi(G)$.  Setting $\omega(\mathcal{H}(G)) = 1$ proves a conjecture of Kierstead and Kostochka.
\end{abstract}

\section{Introduction}
Given a graph $G$, let $\mathcal{H}(G)$ be the subgraph of $G$ induced on the vertices of degree at least $\chi(G)$.  Recently, Kierstead and Kostochka \cite{KK} proved the following theorem and conjectured that the $7$ could be improved to $6$.

\begin{KKT}
$K_{\chi(G)}$ is the only critical graph $G$ with $\chi(G) \geq \Delta(G) \geq 7$ such that $\mathcal{H}(G)$ is independent.
\end{KKT}

We prove this conjecture by establishing the following generalization.

\begin{TheoremM}
$K_{\chi(G)}$ is the only critical graph $G$ with $\chi(G) \geq \Delta(G) \geq 6$ and $\omega(\mathcal{H}(G)) \leq \left \lfloor \frac{\Delta(G)}{2} \right \rfloor - 2$.
\end{TheoremM}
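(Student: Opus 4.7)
I would argue by contradiction: suppose $G \neq K_{\chi(G)}$ is a critical graph with $\chi(G) \geq \Delta(G) \geq 6$ and $\omega(\mathcal{H}(G)) \leq k$, where $k := \lfloor \Delta(G)/2 \rfloor - 2 \geq 1$. Brooks' theorem applied to the critical non-complete graph $G$ forces $\chi(G) \leq \Delta(G)$, so in fact $\chi(G) = \Delta(G) =: \Delta$. Criticality gives minimum degree at least $\Delta - 1$, so every vertex is either \emph{low} (degree $\Delta - 1$) or \emph{high} (degree $\Delta$). The plan is to exhibit a clique of $k+1$ high vertices, directly contradicting the hypothesis.

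To build such a clique I would adapt the Kempe-chain machinery behind the Kierstead--Kostochka theorem. After a short preliminary argument producing a low vertex $v$ (otherwise $G$ is $\Delta$-regular and the absence of large high-cliques becomes incompatible with the critical structure), fix a $(\Delta-1)$-coloring $\phi$ of $G - v$. The $\Delta - 1$ colors each appear on a unique neighbor $u_i$ of $v$, and for every pair $i \neq j$ the $(i,j)$-Kempe component containing $u_i$ must also contain $u_j$, or else $\phi$ extends to $v$. Iteratively shortening these alternating paths and relabeling should force many of the $u_i$ to be mutually adjacent and of degree exactly $\Delta$. In the Kierstead--Kostochka setting with $\omega(\mathcal{H}(G)) = 1$, producing a single edge in $\mathcal{H}(G)$ suffices; here I would push the iteration far enough to produce a clique of size $\lfloor \Delta/2 \rfloor - 1 = k + 1$ in $\mathcal{H}(G)$.

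The main obstacle is precisely this strengthening: when the bound on $\omega(\mathcal{H}(G))$ is relaxed from $1$ to $k$, a Kempe swap which would expose a forbidden edge in the independent-$\mathcal{H}$ case can now be \emph{absorbed} into an already-existing small $\mathcal{H}$-clique, leaving the clique size stationary. I would handle this by induction on the size of the current high-clique $K$ inside $N(v)$: given $K \subseteq \mathcal{H}(G)$ with $|K| = s \leq k$ and $K \subseteq N(v)$, use a pair of colors not yet exhausted by $K$ to perform a fresh Kempe swap on the two corresponding neighbors of $v$, yielding either an extension of $\phi$ to $v$ (contradicting $\chi(G) = \Delta$) or a new high vertex adjacent to all of $K$, which lets the induction continue. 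The condition $\Delta \geq 6$ ensures $k \geq 1$ so the induction has a base case, and the quantitative bound $\omega(\mathcal{H}(G)) \leq \lfloor \Delta/2 \rfloor - 2$ is calibrated exactly so that the supply of fresh color pairs outlasts the absorption steps, allowing the clique to grow to size $k+1$ before colors run out.
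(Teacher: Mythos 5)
Your reduction to $\chi(G) = \Delta(G)$ and the low/high dichotomy matches the paper's opening move, but after that the proposal has a genuine gap at exactly the point where the theorem's difficulty lives. The step ``perform a fresh Kempe swap on two neighbors of $v$, yielding either an extension of $\phi$ to $v$ or a new high vertex adjacent to all of $K$'' is asserted with no mechanism behind it. The standard single-vertex Kempe analysis (each color class meets $N(v)$ exactly once, the $(i,j)$-chain through $u_i$ is a path ending at $u_j$, etc.) gives you control over colors along one chain, but it gives you no reason why the vertex you expose should be \emph{high}, and even less why it should be adjacent to \emph{every} vertex of the previously built clique $K$: a swap recolors a whole chain and can destroy the adjacencies and degree conditions you accumulated at earlier stages. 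Likewise the claim that ``the supply of fresh color pairs outlasts the absorption steps'' is exactly the quantitative heart of the matter, and no counting is offered; nothing in the sketch explains why the threshold $\lfloor \Delta/2 \rfloor - 2$ (rather than, say, $\Delta - 4$ or $\lfloor \Delta/3 \rfloor$) is what the iteration supports. A second, smaller gap: you dismiss the case with no low vertex by saying a $\Delta$-regular critical graph with small clique number is ``incompatible with the critical structure,'' but that statement is itself a nontrivial special case of the theorem and cannot be waved away.

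For comparison, the paper does not try to grow a high clique at all. It uses Mozhan's technique: among all $\chi$-colorings of the form $\{\{x\}, L_{11},\ldots,L_{1r_1}, L_{21},\ldots,L_{2r_2}\}$ with $r_1 = \lfloor (\Delta-1)/2 \rfloor$, $r_2 = \lceil (\Delta-1)/2 \rceil$, it takes one minimizing the number of edges inside the two unions $U_1, U_2$; the key lemma says the component $Z_i(x)$ of $x$ in $G[\{x\} \cup U_i]$ is then complete (or an odd cycle when $r_i = 2$) whenever $d_{Z_i(x)}(x) = r_i$. The hypothesis $\omega(\mathcal{H}(G)) \leq \lfloor \Delta/2 \rfloor - 2$ is used only to guarantee that these complete components always contain low vertices, so that a vertex-swapping algorithm (moving the singleton $x$ through low vertices of alternating sides, with a counter $q$) can run until a vertex is revisited; the final contradiction comes from a list-coloring/Hall-type completion on the union of two cliques, plus separate structural lemmas ($K_6 - e$-freeness and the $K_5$ lemma) for $\Delta = 6$. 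If you want to salvage your approach you would need to supply the missing engine — a verifiable reason each swap produces a high vertex joined to all of $K$ — and that is precisely what the minimal-partition machinery is doing in disguise.
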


Setting $\omega(\mathcal{H}(G)) = 1$ proves the conjecture.

\begin{CorN}
$K_{\chi(G)}$ is the only critical graph $G$ with $\chi(G) \geq \Delta(G) \geq 6$ such that $\mathcal{H}(G)$ is independent.
\end{CorN}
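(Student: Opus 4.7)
The plan is to derive Corollary N as the $\omega(\mathcal{H}(G)) = 1$ specialization of Theorem M, exactly as hinted in the sentence preceding the statement. First I would translate the hypothesis of Corollary N into the language of Theorem M: saying that $\mathcal{H}(G)$ is independent is the same as saying $\mathcal{H}(G)$ contains no edge, and this is equivalent to $\omega(\mathcal{H}(G)) \leq 1$. Second, I would check the numerical inequality: for $\Delta(G) \geq 6$ we have
\[
\left\lfloor \frac{\Delta(G)}{2} \right\rfloor - 2 \;\geq\; \left\lfloor \frac{6}{2} \right\rfloor - 2 \;=\; 1,
\]
so the chain $\omega(\mathcal{H}(G)) \leq 1 \leq \lfloor \Delta(G)/2 \rfloor - 2$ holds automatically. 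The bound is tight precisely when $\Delta(G) \in \{6,7\}$, which is exactly the range in which independence is the strongest hypothesis one could impose in the Theorem M framework.

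Combining these two observations, every critical graph $G$ satisfying the hypothesis of Corollary N also satisfies the hypothesis of Theorem M, and so the conclusion $G = K_{\chi(G)}$ follows immediately. I do not anticipate any obstacle at this step: all of the structural content is absorbed into Theorem M, and Corollary N is simply the specialization of that theorem to the extremal clique parameter $\omega = 1$. The reason for formulating Theorem M in terms of the more general bound $\lfloor \Delta(G)/2 \rfloor - 2$, rather than proving the Kierstead–Kostochka conjecture directly, is presumably that the inductive and structural arguments used for Theorem M do not become easier when one assumes independence outright, so one might as well record the stronger statement and read off the conjecture as a corollary.
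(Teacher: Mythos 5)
Your proposal is correct and matches the paper's own derivation: the paper obtains Corollary N simply by observing that independence of $\mathcal{H}(G)$ means $\omega(\mathcal{H}(G)) \leq 1 \leq \left\lfloor \Delta(G)/2 \right\rfloor - 2$ whenever $\Delta(G) \geq 6$, so Theorem M applies directly. Your numerical check and the specialization argument are exactly what the paper intends by ``Setting $\omega(\mathcal{H}(G)) = 1$ proves the conjecture.''
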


We can restate this in terms of Ore-degree as in \cite{KK} to get a generalization of Brooks' theorem.

\begin{defn}
The \emph{Ore-degree} of an edge $xy$ in a graph $G$ is $\theta(xy) = d(x) + d(y)$.  The \emph{Ore-degree} of a graph $G$ is $\theta(G) = \max_{xy \in E(G)}\theta(xy)$.
\end{defn}

\begin{CorO}
If $6 \leq \chi(G) = \left \lfloor \frac{\theta(G)}{2} \right \rfloor + 1$, then $G$ contains the complete graph $K_{\chi(G)}$.
\end{CorO}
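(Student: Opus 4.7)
The plan is to deduce this directly from Corollary N by passing to a critical subgraph and checking the hypotheses. Set $k = \chi(G)$; the equality $k = \lfloor \theta(G)/2 \rfloor + 1$ rewrites as $\theta(G) \leq 2k-1$, and it suffices to exhibit $K_k$ inside a $k$-critical subgraph $G' \subseteq G$. Taking a critical subgraph preserves the chromatic number, and passing to a subgraph can only decrease degrees, so $\theta(G') \leq \theta(G) \leq 2k-1$.

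Next I would verify that $G'$ meets the hypotheses of Corollary N. Criticality gives $\delta(G') \geq k-1$, so for any vertex $x$ with a neighbor $y$ in $G'$ one has $d_{G'}(x) + (k-1) \leq d_{G'}(x) + d_{G'}(y) \leq \theta(G') \leq 2k-1$, and thus $\Delta(G') \leq k$. If $G' = K_k$ we are already done; otherwise Brooks' theorem forces $\Delta(G') \geq \chi(G') = k$, giving $\Delta(G') = k$. In that case $\mathcal{H}(G')$ is exactly the set of vertices of degree $k$ in $G'$, and any two adjacent such vertices would produce an edge of Ore-degree $2k$, contradicting $\theta(G') \leq 2k-1$. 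Hence $\mathcal{H}(G')$ is independent.

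Now $G'$ is critical with $\chi(G') \geq \Delta(G') = k \geq 6$ and $\mathcal{H}(G')$ independent, so Corollary N delivers $G' = K_k$, and therefore $K_{\chi(G)} = K_k \subseteq G$. There is no real obstacle at this stage: granting Theorem M (and hence Corollary N), the corollary reduces to the single observation that the Ore-degree bound $\theta(G) \leq 2\chi(G)-1$, combined with the criticality bound $\delta \geq \chi - 1$, is exactly what is needed to pin $\Delta(G')$ at $\chi(G')$ and to forbid adjacencies inside $\mathcal{H}(G')$. The substantive work lives in Theorem M itself.
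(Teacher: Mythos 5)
Your argument is correct and is exactly the standard reduction the paper (following Kierstead--Kostochka) has in mind when it calls Corollary O a restatement of Corollary N: pass to a $\chi$-critical subgraph, use $\theta \leq 2\chi - 1$ together with $\delta \geq \chi - 1$ to get $\Delta \leq \chi$ and the independence of $\mathcal{H}$, then invoke Corollary N (with Brooks' theorem handling the $\Delta < \chi$ case). No gaps; this matches the intended proof.
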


This is best possible as shown by the following example from \cite{KK}.

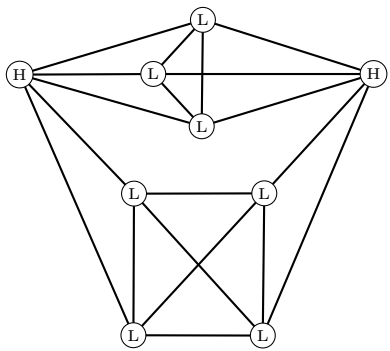
\begin{figure}[h]
\centering
\begin{tikzpicture}[scale = 10]
\tikzstyle{VertexStyle}=[shape = circle,	
								 minimum size = 1pt,
								 inner sep = 1.2pt,
                         draw]
\Vertex[x = 0.270266681909561, y = 0.890800006687641, L = \tiny {L}]{v0}
\Vertex[x = 0.336266696453094, y = 0.962799992412329, L = \tiny {L}]{v1}
\Vertex[x = 0.334666579961777, y = 0.821199983358383, L = \tiny {L}]{v2}
\Vertex[x = 0.56306654214859, y = 0.890800006687641, L = \tiny {H}]{v3}
\Vertex[x = 0.244666695594788, y = 0.731600046157837, L = \tiny {L}]{v4}
\Vertex[x = 0.417866677045822, y = 0.732000052928925, L = \tiny {L}]{v5}
\Vertex[x = 0.243866696953773, y = 0.543200016021729, L = \tiny {L}]{v6}
\Vertex[x = 0.415866762399673, y = 0.542800068855286, L = \tiny {L}]{v7}
\Vertex[x = 0.0926666706800461, y = 0.890000000596046, L = \tiny {H}]{v8}
\tikzstyle{EdgeStyle}=[]
\Edge[](v1)(v0)
\tikzstyle{EdgeStyle}=[]
\Edge[](v2)(v0)
\tikzstyle{EdgeStyle}=[]
\Edge[](v3)(v0)
\tikzstyle{EdgeStyle}=[]
\Edge[](v2)(v1)
\tikzstyle{EdgeStyle}=[]
\Edge[](v3)(v1)
\tikzstyle{EdgeStyle}=[]
\Edge[](v2)(v3)
\tikzstyle{EdgeStyle}=[]
\Edge[](v5)(v4)
\tikzstyle{EdgeStyle}=[]
\Edge[](v6)(v4)
\tikzstyle{EdgeStyle}=[]
\Edge[](v6)(v5)
\tikzstyle{EdgeStyle}=[]
\Edge[](v6)(v7)
\tikzstyle{EdgeStyle}=[]
\Edge[](v7)(v4)
\tikzstyle{EdgeStyle}=[]
\Edge[](v7)(v5)
\tikzstyle{EdgeStyle}=[]
\Edge[](v4)(v8)
\tikzstyle{EdgeStyle}=[]
\Edge[](v6)(v8)
\tikzstyle{EdgeStyle}=[]
\Edge[](v5)(v3)
\tikzstyle{EdgeStyle}=[]
\Edge[](v7)(v3)
\tikzstyle{EdgeStyle}=[]
\Edge[](v0)(v8)
\tikzstyle{EdgeStyle}=[]
\Edge[](v1)(v8)
\tikzstyle{EdgeStyle}=[]
\Edge[](v2)(v8)
\end{tikzpicture}

\caption{A counterexample to Corollary O with $\chi = 5$.}
\end{figure}

\section{The Proof}
We will use part of an algorithm of Mozhan \cite{Mozhan}.  The following is a generalization of his main lemma.

\begin{lem}\label{mozlemma}
Let $G$ be a graph containing at least one critical vertex.  Of all $\chi(G)$-colorings of $G$ of the form

\[\left\{\{x\}, L_{11}, L_{12}, \ldots, L_{1r_1}, L_{21}, L_{22}, \ldots, L_{2r_2}, \ldots, L_{a1}, L_{a2}, \ldots, L_{ar_a}\right\}\]

pick one (call it $\pi$) minimizing

\[\sum_{i = 1}^a \left|E\left(G\left[\bigcup_{j = 1}^{r_i} L_{ij}\right]\right)\right|.\]

Put $U_i = \bigcup_{j = 1}^{r_i} L_{ij}$ and let $Z_i(x)$ be the component of $x$ in $G[\{x\} \cup U_i]$.  If $d_{Z_i(x)}(x) = r_i$, then $Z_i(x)$ is complete if $r_i \geq 3$ and $Z_i(x)$ is an odd cycle if $r_i = 2$.
\end{lem}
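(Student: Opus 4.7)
Fix $i$ and abbreviate $r = r_i$, $L_j = L_{ij}$, $U = U_i$, and $Z = Z_i(x)$. The hypothesis $d_Z(x) = r$ forces $x$ to have exactly one neighbor $a_j \in L_j$ inside $Z$ for each $j$; moreover, since every $U$-colored vertex of $G$ adjacent to $x$ lies in $Z$, we have $N_G(x) \cap U = \{a_1,\ldots,a_r\}$. Assume for contradiction that $Z$ violates the conclusion. Our atomic moves are Kempe swaps of two color classes on a single connected two-color component. An intra-group swap (both classes inside $U$) preserves every $U_{i'}$ and hence both the objective and the singleton class $\{x\}$; its purpose is to free some color $L_j$ at $x$, after which recoloring $x \mapsto L_j$ empties the singleton class and yields a proper $(\chi(G)-1)$-coloring of $G$, contradicting $\chi(G) = \chi$. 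A cross-group swap may alter $U$ and directly decrease $\sum_j |E(G[U_j])|$, contradicting the minimality of $\pi$.

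\textbf{The case $r = 2$.} Here $U = L_1 \cup L_2$ and $Z - x$ is a union of Kempe components of $G[L_1 \cup L_2]$ incident to $\{a_1, a_2\}$. If $Z - x$ is disconnected, swapping $L_1 \leftrightarrow L_2$ on the component of $a_1$ changes $a_1$'s color to $L_2$, so $x$ has no remaining $L_1$-neighbor in $Z$ (hence none in $G$); recolouring $x \mapsto L_1$ finishes. Otherwise $Z - x$ is a connected bipartite graph, and its unique (up to global swap) proper 2-coloring places $a_1, a_2$ in opposite parts. If $Z$ is not an odd cycle, $Z - x$ strictly contains every shortest $a_1$--$a_2$ path, so it has either an off-path vertex $w$ or an extra edge. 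The plan is to find a cross-group Kempe swap $L_j \leftrightarrow L_{i'k}$ on the component of $w$ in $G[L_j \cup L_{i'k}]$ that ejects $w$ from $U$ without inflating any other $|E(G[U_{j'}])|$ by an equal amount, so that minimality of $\pi$ is violated.

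\textbf{The case $r \geq 3$.} If $V(Z) = \{x, a_1, \ldots, a_r\}$, then $Z \neq K_{r+1}$ supplies a non-adjacent pair $(a_j, a_k)$; the component of $x$ in $G[\{x\} \cup L_j \cup L_k]$ is then just the path $a_j\text{--}x\text{--}a_k$, which is not an odd cycle, so swapping $L_j \leftrightarrow L_k$ on the isolated component $\{a_j\}$ frees $L_j$ at $x$ and yields the $(\chi(G)-1)$-coloring. Otherwise $|V(Z)| > r + 1$, and some color class $L_j$ contributes a vertex $u \neq a_j$ to $Z$. One now picks a color $k \neq j$ for which the 2-color component of $x$ in $G[\{x\} \cup L_j \cup L_k]$ contains $u$, and applies the $r = 2$ analysis inside that component: the presence of the extra vertex $u \in L_j$ forces the component to properly extend a minimal $a_j$--$a_k$ path, so either an intra-group swap frees a color at $x$ or a cross-group relocation of $u$ strictly reduces the objective. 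If no such $k$ captures $u$, one first performs a within-group swap in a 2-color component away from $x$ to bring $u$ into a $Y_{jk}$ and then proceeds as above.

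\textbf{Main obstacle.} The delicate situation in both cases is a connected 2-color Kempe component that strictly extends a shortest end-to-end path. No intra-group swap alone frees a color at $x$ in such a component, so the contradiction must come from a cross-group relocation that strictly decreases $\sum_j |E(G[U_j])|$. Locating an off-path vertex $w$ that admits a legal target class $L_{i'k'}$ in another group, and verifying that the resulting swap does not raise any other $|E(G[U_{j'}])|$ by a compensating amount, is the technical heart of the argument. I expect this step to combine a pigeonhole count on $w$'s neighbors against the available classes with the minimality of $\pi$, and to be where the bulk of the case-work lies.
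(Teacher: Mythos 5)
There is a genuine gap, and you acknowledge it yourself: the entire ``main obstacle'' paragraph is a plan, not a proof. The easy subcases you do settle (the disconnected $Z-x$ case for $r=2$, and the case $V(Z)=\{x,a_1,\ldots,a_r\}$ for $r\geq 3$) are fine, but the heart of the lemma is exactly the situation you leave open: a connected two-colored component that properly extends a shortest path. Worse, the tool you propose for it --- a cross-group Kempe swap $L_j \leftrightarrow L_{i'k}$ that ``ejects'' an off-path vertex $w$ --- is not obviously usable: such a swap exchanges an entire two-colored component between the groups $U_i$ and $U_{i'}$, so it can move many vertices in both directions, and its effect on $\sum_i |E(G[U_i])|$ is not controlled by the local picture around $w$. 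Nothing in your sketch shows the objective strictly decreases, and the $r\geq 3$ reduction ``apply the $r=2$ analysis inside $G[\{x\}\cup L_j\cup L_k]$'' (with the undefined $Y_{jk}$) inherits the same unresolved step, so the argument as written does not close.

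The paper avoids all of this with two ideas you are missing. First, it does not try to characterize $Z_i(x)$ by Kempe chains at all; it only proves $\Delta(Z_i(x)) \leq r_i$, and does so with a single clean move: take a vertex $y$ of $Z_i$-degree greater than $r_i$ closest to $x$, and rotate colors along a shortest $x$--$y$ path inside $Z_i$ (each interior vertex takes its successor's color, and $y$ takes the singleton color of $x$). This ``generalized Kempe chain'' moves exactly one vertex ($y$, with more than $r_i$ neighbors in $U_i$, or at least $r_i$ after discounting an edge to $x$) out of $U_i$ and exactly one vertex ($x$, with $r_i$ neighbors) in, so $|E(G[U_i])|$ strictly drops, contradicting minimality of $\pi$ --- no cross-group bookkeeping is needed because the vertex leaving $U_i$ simply becomes the new singleton. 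Second, once $\Delta(Z_i)\leq r_i$, one observes $\chi(Z_i)=r_i+1$ (otherwise recolor $Z_i$ within the $r_i$ colors of $U_i$ and delete the singleton class, giving a $(\chi(G)-1)$-coloring), and Brooks' theorem immediately yields that $Z_i$ is $K_{r_i+1}$ or an odd cycle. Your route, even if the missing step could be repaired, would in effect be re-proving this Brooks-type dichotomy by hand; invoking Brooks' theorem after a degree bound is what makes the paper's proof short.
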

\begin{proof}
Let $1 \leq i \leq r_i$ such that $d_{Z_i(x)}(x) = r_i$. Put $Z_i = Z_i(x)$.\newline

First assume that $\Delta(Z_i) > r_i$.  Take $y \in V(Z_i)$ with $d_{Z_i}(y) > r_i$ closest to $x$ and let $x_1x_2\cdots x_t$ be a shortest $x-y$ path in $Z_i$.  Plainly, for $k < t$, each $x_k$ hits exactly one vertex in each color class besides its own.  Thus we may recolor $x_k$ with $\pi(x_{k + 1})$ for $k < t$ and $x_t$ with $\pi(x_1)$ to produce a new $\chi(G)$-coloring of $G$ (this can be seen as a generalized Kempe chain).  But we've moved a vertex ($x_t$) of degree $r_i + 1$ out of $U_i$ while moving in a vertex ($x_1$) of degree $r_i$ violating the minimality condition on $\pi$.  This is a contradiction.\newline

Thus $\Delta(Z_i) \leq r_i$.  But $\chi(Z_i) = r_i + 1$, so Brooks' theorem implies that $Z_i$ is complete if $r_i \geq 3$ and $Z_i$ is an odd cycle if $r_i = 2$.
\end{proof}

Now to prove Theorem M, we assume it is false and derive a contradiction from properties of a minimal counterexample.  Let $G \neq K_{\chi(G)}$ be a critical graph with $\chi(G) \geq \Delta(G) \geq 6$ and $\omega(\mathcal{H}(G)) \leq \left \lfloor \frac{\Delta(G)}{2} \right \rfloor - 2$ having the minimum number of vertices.

\begin{defn}
We call $v \in V(G)$ \emph{low} if $d(v) = \chi(G) - 1$ and \emph{high} otherwise.
\end{defn}

\begin{lem}
If $\Delta(G) = 6$, then $G$ contains no $K_{6} - e$.
\end{lem}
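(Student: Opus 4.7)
The plan is to assume $G$ contains a $K_6 - e$ on vertices $v_1, \ldots, v_6$ with $v_1 v_2$ the missing edge, and derive a contradiction via Lemma~\ref{mozlemma}. Since $\omega(\mathcal{H}(G)) \le \lfloor \Delta(G)/2 \rfloor - 2 = 1$ makes the high vertices independent and $\{v_3, v_4, v_5, v_6\}$ is a $K_4$, at most one of these is high; WLOG $v_4, v_5, v_6$ are all low, so each has $d(v_i) = 5$ and $N(v_i) = \{v_1, v_2, v_3\} \cup (\{v_4, v_5, v_6\} \setminus \{v_i\})$ for $i \in \{4, 5, 6\}$.

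I would first show that in every $5$-coloring $\phi$ of $G - v_4$ (which exists by criticality), the colors of $v_1$ and $v_2$ are distinct. Since $\phi$ cannot extend to $G$, the neighborhood $\{v_1, v_2, v_3, v_5, v_6\}$ of $v_4$ must exhaust all five colors; the triangle $v_3 v_5 v_6$ uses three, so $v_1, v_2$ occupy the remaining two, and equality would leave the fifth color free for $v_4$, yielding a $5$-coloring of $G$ and contradicting $\chi(G) = 6$. Extending such a $\phi$ by a new sixth color on $v_4$ gives a $6$-coloring $\pi$ of $G$ in which $\{v_4\}$ is a singleton class and $v_4$'s five neighbors occupy the other classes bijectively. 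I would then apply Lemma~\ref{mozlemma} to $\pi$ with $x = v_4$ and the grouping $a = 1$, $r_1 = 5$: $U_1 = V(G) \setminus \{v_4\}$, the component $Z_1(v_4)$ of $v_4$ in $G$ equals $G$ by connectedness, and $d_{Z_1(v_4)}(v_4) = 5 = r_1$ by the bijection. The lemma then forces $Z_1(v_4) = G$ to be complete, giving $G = K_6$, contradicting $G \ne K_{\chi(G)}$.

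The main obstacle is ensuring $\pi$ qualifies as the minimizer required by Lemma~\ref{mozlemma}: for the trivial grouping the sum $\sum_i |E(G[U_i])|$ equals $|E(G)| - d(x)$, which is minimized by choosing $x$ to be a vertex of maximum degree (hence a high vertex) rather than $v_4$. One remedy is to exploit the near-twin structure of $v_4, v_5, v_6$, arguing via the Kempe-chain reduction in the lemma's proof that any minimizer whose singleton lies outside $\{v_4, v_5, v_6\}$ can be transformed into one with singleton in $\{v_4, v_5, v_6\}$ of no greater edge-sum; alternatively, one can use the grouping $(r_1, r_2) = (4, 1)$, under which the lemma still applies for a high singleton $x$ and yields $Z_1(x) = K_5$, and then show that such a $K_5$ is forced to meet the $K_6 - e$ in a way that includes both $v_1$ and $v_2$, contradicting their non-adjacency in $G$. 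This final bridging step, reconciling the minimization side-condition with the desired structural conclusion, is the delicate part of the proof.
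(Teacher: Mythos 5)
There is a genuine gap, and you have already located it yourself: the application of Lemma~\ref{mozlemma} to the coloring $\pi$ obtained by giving $v_4$ a brand-new color is not legitimate, because the lemma's conclusion holds only for a coloring that minimizes $\sum_i |E(G[U_i])|$ over all colorings of the prescribed form. For the grouping $a=1$, $r_1=5$ that quantity equals $|E(G)|-d(x)$, and since $G$ cannot be $5$-regular (Brooks' theorem would give $\chi(G)\le 5$), every minimizer has a high vertex as its singleton; a coloring whose singleton is the low vertex $v_4$ has strictly larger edge-sum. So your first remedy---transforming a minimizer into one with singleton in $\{v_4,v_5,v_6\}$ ``of no greater edge-sum''---is not merely delicate but impossible for this grouping. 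The second remedy is also not a proof: with $(r_1,r_2)=(4,1)$ a high singleton $x$ only satisfies $d_{Z_1(x)}(x)=4$ \emph{or} $d_{Z_2(x)}(x)=1$ (its six neighbours hit each of the five other classes, one class twice); the latter case yields nothing since the lemma is vacuous for $r_i=1$, and even when a $K_5$ through $x$ is produced you give no argument forcing it to contain both $v_1$ and $v_2$. Your first paragraph (in every $5$-coloring of $G-v_4$ the five neighbours of $v_4$, in particular $v_1$ and $v_2$, receive distinct colors) is correct, but the contradiction is never actually reached.

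For comparison, the paper's proof does not use Lemma~\ref{mozlemma} at all and is much shorter: delete all six vertices of the $K_6-e$, $5$-color the rest by criticality, and consider the induced lists on $H=K_6-e$. The two nonadjacent vertices $x_1,x_2$ have at most two neighbours outside $H$, so $|J(x_1)|+|J(x_2)|\ge 6>5$ and they share an available color; color both with it. The remaining four vertices form a $K_4$ containing at most one high vertex (this is where $\omega(\mathcal{H}(G))\le 1$ enters), so their residual lists have sizes at least $4,4,4,3$ and the coloring completes by Hall's theorem, contradicting $\chi(G)=6$. If you want to salvage your line of attack, the productive move is exactly this kind of precoloring extension: your observation that $v_1$ and $v_2$ must get distinct colors in every $5$-coloring of $G-v_4$ is the contrapositive of the fact that identifying their colors would let the coloring complete, which is what the paper exploits directly rather than routing through Mozhan's lemma.
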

\begin{proof}
Assume $\Delta(G) = 6$ and that $G$ contains a $K_{6} - e$, call it $H$.  Let $x_1, x_2 \in V(H)$ with $d_H(x_i) = 4$.  Color $G - H$ with $5$ colors and let $J$ be the resulting list assignment on $H$.  Then
$|J(x_1)| + |J(x_2)| \geq d_H(x_1) + d_H(x_2) - 2 \geq 2*6 - 6 \geq 6$.  Hence we have $c \in J(x_1) \cap J(x_2)$.  Color both $x_1$ and $x_2$ with $c$ to get a list assignment $J'$ on $F = H - \{x_1,x_2\}$. Since $\Delta(G) = 6$, $\mathcal{H}(G)$ is independent.  Thus at most one vertex $y \in V(F)$ is high.  Hence $|J'(y)| \geq 3$ and $|J'(z)| \geq 4$ for all $z \in V(F) - \{y\}$.  Since $F$ has $4$ vertices we can complete the $5$ coloring using Hall's theorem.  This contradiction completes the proof.
\end{proof}

\begin{lem}\label{NoTwoHitter}
Assume $\Delta(G) = 6$. Let $C$ be a $K_{5}$ clique in $G$ with at most one high vertex.  Then each vertex in $G - C$ is adjacent to at most one low vertex in $C$.
\end{lem}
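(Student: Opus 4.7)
The plan is to derive a contradiction from the assumption that some $v\in V(G)\setminus V(C)$ is adjacent to two low vertices $u_1,u_2\in V(C)$, by producing a proper $5$-coloring of $G$ (contradicting $\chi(G)=6$). Since each $u_i$ has degree $\chi(G)-1=5$ with four of its neighbors already inside $C$, each has a unique neighbor outside $C$, which must be $v$ in both cases. Write $\{c_1,c_2,c_3\}:=V(C)\setminus\{u_1,u_2\}$. The first observation uses the preceding lemma (no $K_6-e$): the induced subgraph on $V(C)\cup\{v\}$ has $\binom{5}{2}+|N_C(v)|$ edges, so $|N_C(v)|\ge 4$ would force $G$ to contain $K_6-e$. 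Hence $v$ is adjacent to at most one of $c_1,c_2,c_3$; WLOG $v\not\sim c_2$ and $v\not\sim c_3$.

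Next comes the coloring reduction. By minimality of $G$, the graph $G':=G-\{u_1,u_2\}$ admits a proper $5$-coloring $c$. The triangle on $c_1,c_2,c_3$ uses three distinct colors. Extending $c$ to a $5$-coloring of $G$ requires assigning the adjacent pair $u_1,u_2$ distinct values avoiding $\{c(c_1),c(c_2),c(c_3),c(v)\}$, which works iff that set has size three---equivalently iff $c(v)\in\{c(c_1),c(c_2),c(c_3)\}$ (leaving each $u_i$ two available colors to pick distinct values from). So it suffices to produce a $5$-coloring of $G'$ in which $c(v)$ matches some $c(c_j)$.

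Starting from any $5$-coloring with $c(v)\notin\{c(c_1),c(c_2),c(c_3)\}$, we attempt Kempe-chain swaps. For $j\in\{2,3\}$, let $K_j$ denote the $(c(v),c(c_j))$-Kempe chain containing $v$; since $v\not\sim c_j$, if $c_j\notin K_j$ then swapping colors along $K_j$ yields a valid $5$-coloring with $c(v)=c(c_j)$, completing the proof. The hard part will be the subcase where both $c_2\in K_2$ and $c_3\in K_3$. In that case $v$ must have distinct neighbors $w_2,w_3$ of colors $c(c_2),c(c_3)$ in $G'$, which together with $u_1,u_2$ and (possibly) $c_1$ accounts for most of $N(v)$ given $d(v)\le 6$. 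We plan to escape this deadlock by an auxiliary Kempe swap performed along a chain disjoint from $v$---for instance a $(c(c_2),c(c_3))$-component intersecting $c_2$ or $c_3$, or (if $|N_C(v)|=2$) the $(c(v),c(c_1))$-chain of $v$---to reconfigure the coloring so that, in the updated coloring, the original Kempe swap at $v$ is no longer obstructed. Showing that such a reconfiguration is always available, exploiting the tight constraints on $v$'s neighborhood imposed by $\Delta(G)=6$ together with the forbidden $K_6-e$, is the technical crux of the argument.
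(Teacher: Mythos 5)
Your setup is correct as far as it goes: $v$ is the unique external neighbor of both $u_1,u_2$, the no-$K_6-e$ lemma gives $|N_C(v)|\le 3$ so $v$ misses two of $c_1,c_2,c_3$, and a $5$-coloring of $G-\{u_1,u_2\}$ extends to $G$ exactly when $c(v)$ agrees with one of the triangle colors. But the proof stops precisely where the work is. In the case where the $(c(v),c(c_2))$-chain through $v$ reaches $c_2$ and the $(c(v),c(c_3))$-chain through $v$ reaches $c_3$, you offer only a plan (``an auxiliary Kempe swap \dots\ to reconfigure the coloring'') and explicitly defer the verification that such a reconfiguration always exists. That deferred step is the whole difficulty: after an auxiliary $(c(c_2),c(c_3))$-swap, the chains at $v$ can reconnect through distant parts of the graph, and nothing in your sketch controls this; the constraints you cite ($\Delta=6$, no $K_6-e$, the tight neighborhood of $v$) are local to $V(C)\cup\{v\}$ and do not obviously rule out the possibility that both chains are blocked in every $5$-coloring of $G-\{u_1,u_2\}$. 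Note also that your argument never invokes the minimality of the counterexample $G$, which is exactly the tool the paper uses to break this impasse; a purely local Kempe argument may or may not be completable, but as written it is not a proof.

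For comparison, the paper avoids recoloring entirely. Since $C$ has at least four low vertices and $x$ (your $v$) is adjacent to at most three vertices of $C$, there is a vertex $y\neq x$ outside $C$ whose neighborhood meets the low vertices of $C$. One considers $T=(G-C)+xy$, which has fewer vertices than $G$; by minimality of the counterexample (together with Brooks' theorem applied to a critical subgraph), $T$ is either $5$-colorable or contains $K_6$. A $5$-coloring of $T$ is a $5$-coloring of $G-C$ in which $x$ and $y$ receive different colors, and such a coloring extends to $C$; a $K_6$ in $T$ must use the added edge $xy$ and hence produces a $K_6-e$ in $G$, contradicting the preceding lemma. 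So the missing idea in your proposal is to force the favorable coloring by adding an edge and appealing to minimality of $G$, rather than by Kempe exchanges; until the ``deadlock'' case is actually resolved, your argument has a genuine gap at its central step.
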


\begin{proof}
Assume otherwise that some $x \in V(G - C)$ is adjacent to all of $S \subseteq C$ where each vertex in $S$ is low and $|S| \geq 2$. Put $F = G - C$.  Then $F$ is $5$ colorable.  Since each vertex in $C$ is adjacent to at least one vertex in $F$ and $G$ contains no $K_{6} - e$, we have $y \in V(F)$  with $y \neq x$ such that $N(y) \cap C$ contains low vertices.  Consider the graph $T = F + xy$.  Note that $d_T(x) \leq 5$ and $d_T(y) \leq 6$.  By minimality of $G$, $T$ is either $5$ colorable or contains a $K_{\Delta(G)}$.  In the former case we get a $5$ coloring of $F$ where $x$ and $y$ receive different colors, but this is easily completable to a coloring of $G$.  Thus $T$ contains $K_{6}$ and hence $G$ contains a $K_{6} - e$ giving a contradiction.
\end{proof}

Note that in Lemma \ref{mozlemma}, if $d_{Z_i(x)}(x) = r_i$ then we can \emph{swap} $x$ with any other $y \in Z_i(x)$ by changing $\pi$ so that $x$ is colored with $\pi(y)$ and $y$ is colored with $\pi(x)$ to get another minimal $\chi(G)$-coloring of $G$. \newline

\begin{proof}[Proof of Theorem M]
First, if $\chi(G) > \Delta(G)$ the theorem follows from Brooks' theorem.\newline

Hence we may assume that $\chi(G) = \Delta(G)$.  Put $\Delta = \Delta(G)$, $r_1 = \left \lfloor \frac{\Delta - 1}{2} \right \rfloor$ and $r_2 = \left \lceil \frac{\Delta - 1}{2} \right \rceil$.  Of all $\chi(G)$ colorings of $G$ of the form

\[\left\{\{x\}, L_{11}, \ldots, L_{1r_1}, L_{21}, \ldots, L_{2r_2} \right \}\]

pick one minimizing

\[\sum_{i = 1}^2 \left|E\left(G\left[\bigcup_{j = 1}^{r_i} L_{ij}\right]\right)\right|.\]

Throughout the proof we refer to a coloring that minimizes the above function as a \emph{minimal} coloring. Put $U_i = \bigcup_{j = 1}^{r_i} L_{ij}$ and let $C_i = \pi(U_i)$ (the colors used on $U_i$).  For a minimal coloring $\gamma$ of $G$, let $Z_{\gamma, i}(x)$ be the component of $x$ in $G[\{x\} \cup \gamma^{-1}(C_i)]$.  Put $Z_i(x) = Z_{\pi, i}(x)$.\newline

Note that $r_1 \geq 2$ and $r_2 \geq 3$ and if $r_1 = 2$ then $r_2 = 3$, $\Delta = 6$ and $\omega(\mathcal{H}(G)) \leq 1$.\newline

First assume $x$ is high. Then $d(x) = r_1 + r_2 + 1$ and hence $d_{Z_i(x)}(x) = r_i$ for some $i \in \{1,2\}$. Hence, by Lemma \ref{mozlemma}, either $Z_i(x)$ is complete or is an odd cycle with at least $5$ vertices. In the first case, $Z_i(x)$  contains at least $r_i - \left \lfloor \frac{\Delta(G}{2} \right \rfloor + 2 \geq i \geq 1$.  In the second case, $r_i = r_1 = 2$, so $\mathcal{H}(G)$ is independent.  Thus $Z_i(x)$ contains at least $3$ low vertices.  Hence we can swap $x$ with a low vertex in $U_i$ to get another minimal $\chi(G)$ coloring.\newline

Thus we may assume that $x$ is low.  For $i \geq 0$, let $p_i = 1$ if $i$ is odd and $p_i = 2$ if $i$ is even. Consider the following algorithm.
\begin{enumerate}
\item Put $q_0(y) = 0$ for each $y \in V(G)$.
\item Put $x_0 = x$, $\pi_0 = \pi$ and $i = 0$.
\item Pick a low vertex $x_{i + 1} \in Z_{\pi_i, p_i}(x_i) - x_i$ first minimizing $q_i(x_{i + 1})$ and then minimizing $d(x_i, x_{i + 1})$. Swap $x_{i + 1}$ with $x_i$. Let $\pi_{i+1}$ be the resulting coloring.
\item Put $q_i(x_i) = q_i(x_{i+1}) + 1$.
\item Put $q_{i+1} = q_i$.
\item Put $i = i + 1$.
\item Goto (3).
\end{enumerate}

Since $V(G)$ is finite, we have a smallest $k$ such that we are at step (3),
$p_k = 2$, and $q_k(z) = 1$ for some low vertex $z \in Z_{\pi_k, 2}(x_k) - x_k$.\newline

\textbf{Claim: $q_k(y) \leq 1$ for all $y \in V(G)$.}

Assume to the contrary that we have $y \in V(G)$ with $q_k(y) > 1$, then there is a first $j < k$ for which $q_j(y) > 1$.  From the first minimality condition in step (3) we see that we must have $q_j(t) = 1$ for each low vertex $t \in Z_{\pi_j, p_j}(x_j) - x_j$.  In addition, $p_j = 1$ by the minimality of $k$.\newline

For each low $t \in Z_{\pi_j, p_j}(x_j) - x_j$, let $m(t)$ be the least $a$ such that $t = x_a$.  We will show that there exists low $t \in Z_{\pi_j, p_j}(x_j) - x_j$ such that $x_{m(t)}$ is adjacent to $x_{m(t) + 1}$.  Plainly, this is the case if $r_1 \geq 3$ since then $Z_{\pi_j, p_j}(x_j)$ is complete for all $j$ and $x_{m(t)}$ is always adjacent to $x_{m(t) + 1}$.  Thus we may assume that $r_1 = 2$, $r_2 = 3$, $\Delta = 6$ and $\mathcal{H}(G)$ is independent.  Let $t_1, t_2, \ldots, t_b$ be the low vertices of $Z_{\pi_j, p_j}(x_j)$ ordered by $m(t_l)$.  Since $ Z_{\pi_{m(t_1)}, 1}(t_1)$ is an odd cycle and $\mathcal{H}(G)$ is independent, $Z_{\pi_{m(t_1)}, 1}(t_1)$ contains a pair of adjacent low vertices, say $u$ and $v$. If $N(t_1) \cap Z_{\pi_{m(t_1)}, 1}(t_1)$ contains a low vertex, then $t_1$ is our desired $t$ by the second minimality condition in step (3).  Thus $t_1 \not \in \{u, v\}$.  Take $l$ minimal such that $u = x_{m(t_l) + 1}$ or $v = x_{m(t_l) + 1}$.  Without loss of generality, say $u = x_{m(t_l) + 1}$. Then $t_{l+1}$ must be adjacent to $v$ and thus $t_{l+1}$ is our desired $t$ by the second minimality condition in step (3).\newline

Now, put $a = m(t)$, $H_a = N(x_a) \cap \pi_a^{-1}(C_2)$ and $H_j = N(x_a) \cap \pi_j^{-1}(C_2)$.  Since $x_{a-1} \in H_a$ and $q_{a-1}(x_{a-1}) = 1$, by the minimality of $k$, $N(x_m) \cap H_a = \emptyset$ for $a \leq m < k$.  Thus $H_a \subseteq H_j$.  Since $x_{a+1}$ is adjacent to $x_a$ we have $x_{a + 1} \in H_j - H_a$ and thus $|H_j| \geq |H_a| + 1 = r_2 + 1$.  But then $d(x_a) \geq r_1 + r_2 + 1 \geq \Delta$ contradicting the fact that $x_a$ is low. This proves the claim.\newline

\bigskip

Now, remember our low vertex $z \in Z_{\pi_k, 2}(x_k) - x_k$  with $q_k(z) = 1$. Let $w \in Z_{\pi_k, 2}(x_k) - \{x_k, z\}$ be a low vertex and let $e$ be minimal such that $x_e = z$.  Consider the change of $\pi_k$ given by swapping $x_k$ with $z$ to get a minimal coloring $\pi'$.  Also consider the change of $\pi_k$ given by swapping $x_k$ with $w$ to get a minimal coloring $\pi''$. Since $q_k(x_{e+1}) \leq 1$, it must be that $x_{e+1} \in Z_{\pi', 1}(z) \cap Z_{\pi '', 1}(w)$ and hence $Z_{\pi ', 1}(z) - z = Z_{\pi'', 1}(w) - w$.  Let $T = V(Z_{\pi ', 1}(z)) - z$, $D = V(Z_{\pi_k, 2}(x_k)),$ and $F = G[T \cup D]$.\newline

Since $G$ is critical, we may $\Delta - 1$ color $G - F$.  Doing so leaves a list assignment $J$ on $F$ where $|J(v)| = d_F(v)$ if $v \in V(F)$ is low and $|J(v)| = d_F(v) - 1$ if $v \in V(F)$ is high.  Assume $x_k$ is not adjacent to $x_{e + 1}$.  Since both are low vertices we have $|J(x_k)| + |J(x_{e+1})| \geq d_F(x_k) + d_F(x_{e+1})$.  Clearly, $d_F(z_k) \geq r_2$.  Also, since $x_{e+1}$ is adjacent to all of $D$ we have $d_F(x_{e+1}) \geq r_2 + r_1 - 1$ if $r_1 \geq 3$ and $d_F(x_{e+1}) \geq r_2$ if $r_1 = 2$.  Note that in both cases, $d_F(x_k) + d_F(x_{e+1}) \geq r_1 + r_2 + 1$.  Since the lists together contain at most $\Delta - 1 = r_1 + r_2$ colors, we have $c \in J(x_k) \cap J(x_{e+1})$.  If we color both $x_k$ and $x_{e+1}$ with $c$ it is easy to complete the coloring to the rest of $F$ by first coloring $F - \{z,w, x_k, x_{e+1}\}$ and then coloring $z$ and $w$.  This is a contradiction, hence $x_k$ is adjacent to $x_{e + 1}$.\newline

First assume $\Delta = 6$.  Then $|T| = 2$, say $T = \{z', x_{e + 1}\}$.  Now $D \cup \{x_{e + 1}$ induces a $K_5$ with at most one high vertex and $z'$ is adjacent to the low vertices $w, z \in D$.  Thus Lemma \ref{NoTwoHitter} gives a contradiction.\newline

Hence we may assume that $\Delta \geq 7$.  Put $C = \{z, w\}$, $A = T - \{x_{e+1}\}$ and $B = D - \{z, w\} \cup \{x_{e+1}\}$ and $F' = F - \{z, w\}$.  Then $A$ and $B$ are cliques that cover $F'$ and $x_{e+1}$ is joined to $A$.  As above we may $\Delta - 1$ color $G - F$.  Doing so leaves a list assignment $J$ on $F$ where $|J(v)| = d_F(v)$ if $v \in V(F)$ is low and $|J(v)| = d_F(v) - 1$ if $v \in V(F)$ is high.  If we can find non-adjacent $y_1, y_2 \in V(F')$ such that $J(y_1) \cap J(y_2) \neq \emptyset$, then after coloring $y_1$ and $y_2$ the same we can easily complete the coloring to the rest $F'$ and then to $F$.  Since $G$ contains no $K_{\Delta}$ we have non-adjacent vertices $y_1 \in A$ and $y_2 \in B$. Let $l(y_1, y_2) = |\{i \mid y_i \text{ is low}\}|$ and $n(y_1) = |N(y_1) \cap V(B)|$.  Since $x_{e+1}$ is joined to $A$, $n(y_1) \geq 1$. We have
\begin{align*}
|L(y_1)| + |L(y_2)| &\geq d_F(y_1) + d_F(y_2) - 2 + l(y_1, y_2)\\
&\geq d_{F'}(y_1) + d_{F'}(y_2) + 2 + l(y_1, y_2)\\
&\geq |A| - 1 + n(y_1) + |B| - 1 + 2 + l(y_1, y_2)\\
&= |A| + |B| + n(y_1) + l(y_1, y_2)\\
&= \Delta - 2 + n(y_1) + l(y_1, y_2)
\end{align*}

Since there are at most $\Delta - 1$ colors in both lists, if $n(y_1) + l(y_1, y_2) \geq 2$ we have $L(y_1) \cap L(y_2) \neq \emptyset$ giving a contradiction.  Whence
$n(y_1) + l(y_1, y_2) \leq 1$, giving $l(y_1, y_2) = 0$ and $n(y_1) = 1$.  But $x_k \in B$ is low, so using $y_2 = x_k$ shows that $x_k$ is joined to $A$.  But then $n(y_1) \geq 2$ for any $y_1 \in A$.  This final contradiction completes the proof.
\end{proof}


\begin{thebibliography}{1}
\bibitem{KK}
H.A. Kierstead, A.V. Kostochka.
\newblock Ore-type versions of Brooks' theorem
\newblock{\em Journal of Combinatorial Theory, Series B,} \textbf{99}, 2009, \mbox{298-305}.

\bibitem{Mozhan}
N.N. Mozhan.
\newblock Chromatic number of graphs with a density that does not exceed two-thirds of the maximal degree.
\newblock{\em Metody Diskretn. Anal.,} \textbf{39}, 1983, \mbox{52-65}.
\end{thebibliography}
\end{document}